\definecolor{Red}{rgb}{1,0,0}
\definecolor{Blue}{rgb}{0,0,1}
\definecolor{Olive}{rgb}{0.41,0.55,0.13}
\definecolor{Green}{rgb}{0,1,0}
\definecolor{MGreen}{rgb}{0,0.8,0}
\definecolor{DGreen}{rgb}{0,0.55,0}
\definecolor{Yellow}{rgb}{1,1,0}
\definecolor{Cyan}{rgb}{0,1,1}
\definecolor{Magenta}{rgb}{1,0,1}
\definecolor{Orange}{rgb}{1,.5,0}
\definecolor{Violet}{rgb}{.5,0,.5}
\definecolor{Purple}{rgb}{.75,0,.25}
\definecolor{Brown}{rgb}{.75,.5,.25}
\definecolor{Grey}{rgb}{.5,.5,.5}
\definecolor{Black}{rgb}{0,0,0}
\newcommand{\eps}{\varepsilon}
\newcommand{\ind}{\mathbf{1}}
\newcommand{\E}{\mathbb{E}}
\renewcommand{\P}{\mathbb{P}}
\newcommand{\diff}{\mathrm{d}}
\newtheorem{theorem}{Theorem}
\newtheorem{proposition}{Proposition}
\newtheorem{corollary}{Corollary}
\newtheorem{definition}{Definition}
\newtheorem{example}{Example}
\newtheorem{lemma}{Lemma} 
\newcommand{\weight}{w}
\newcommand{\tree}{\mathcal{T}}
\newcommand{\edges}{\mathcal{E}}
\newcommand{\vertices}{\mathcal{V}}
\renewcommand{\root}{\rho}
\newcommand{\state}{\sigma}
\newcommand{\estate}{\hat{\state}}
\renewcommand{\weight}{\theta}
\newcommand{\minweight}{\weight_*}
\newcommand{\cutset}{\pi}
\newcommand{\allcutsets}{\mathscr{C}}
\newcommand{\fitchstate}{\widehat{S}}
\newcommand{\accuracy}{\mathrm{RA}}
\newcommand{\FF}{\mathbf{u}}
\newcommand{\GG}{\mathbf{d}}
\newcommand{\FFabs}{\mathbf{U}}
\newcommand{\distr}{\Theta}
\newcommand{\tweight}{\tilde{\weight}}
\newcommand{\tperc}{\tilde{J}}
\newcommand{\ttree}{\tilde{\tree}}
\newcommand{\tdistr}{\widetilde{\Theta}}
\newcommand{\tedges}{\tilde{\edges}}
\newcommand{\tvertices}{\tilde{\vertices}}
\newcommand{\noext}{\mathcal{N}_{\mathrm{ext}}}
\newcommand{\cutoff}{\minweight}
\newcommand{\pcut}{\tau}
\newcommand{\neighb}{\mathcal{D}}
\begin{document}

\title{\vspace{0cm}
Sufficient condition
for 
root reconstruction
by parsimony
on binary trees with general weights
\footnote{
Keywords: 
Markov model on a tree,
reconstruction problem,
branching number,
parsimony principle}
}

\author{
Sebastien Roch\footnote{Department of Mathematics at the University of Wisconsin--Madison.
	Supported by NSF grants DMS-1248176, DMS-1149312 (CAREER), DMS-1614242, CCF-1740707 (TRIPODS), DMS-1902892, DMS-2023239, as well as a Simons Fellowship
	and a Vilas Associates Award. \texttt{roch@math.wisc.edu}}
\and
Kun-Chieh Wang
}
\maketitle

\begin{abstract}
We consider the problem of inferring an ancestral state from observations at the leaves of a tree, assuming
the state evolves along the tree according to a two-state symmetric Markov process. We establish a general branching rate condition
under which maximum parsimony, a common reconstruction method requiring only the knowledge of the tree, succeeds better than random guessing uniformly in the depth of the tree. We thereby generalize previous results of~\cite{GascuelSteel:10,ZhShYa+:10}. Our results
apply to both deterministic and i.i.d.~edge weights.
\end{abstract}

\thispagestyle{empty}

\clearpage

\section{Introduction}\label{section:introduction}

\paragraph{Ancestral reconstruction}
In biology, the inferred evolutionary history of organisms is depicted by a phylogenetic tree, that is, a rooted tree whose branchings indicate past speciation events with the leaves representing living species. The evolution
of features, such as the nucleotide at a given position in the genome of a species, the presence or absence of a protein or the number of horns in a lizard, is commonly assumed to follow Markovian dynamics along the tree~\cite{Steel:16}. That is, on each edge, the state of the feature changes according to a Markov process; at bifurcations, two independent copies of the feature evolve along the outgoing edges starting from the state at the branching point.

Here we consider the problem of inferring an ancestral state from observations of a feature at the leaves of a known phylogenetic tree. We refer to this problem, which has important applications in biology~\cite{Mossel:04a,DaMoRo:11a,roch_phase_2017}, as the ancestral reconstruction problem. Many rigorous results have been obtained in previous work; see, e.g.,~\cite{KestenStigum:66,BlRuZa:95,Ioffe:96a,TuffleySteel:97,Mossel:98,EvKePeSc:00,Mossel:01,MosselPeres:03,BoChMoRo:06,GascuelSteel:10,ZhShYa+:10,Sly:11,mossel_robust_2013,mossel_majority_2014,bhatnagar_decay_2016,liu_tightness_2018,fan_necessary_2018,makur_broadcasting_2020,fan_statistically_2020,gu_broadcasting_2020,addario-berry_broadcasting_2020} for a partial list. Typically, one seeks an estimator of the root state which is strictly superior to random guessing---uniformly in the depth of the tree---under a uniform prior on the root~\cite{Mossel:01}. Whether such an estimator exists has been shown to depend on a trade-off between the mixing rate of the Markov process and the growth rate of the tree. In some cases, for instance in two-state symmetric Markov chains on binary trees~\cite{KestenStigum:66,Ioffe:96a} as well as on more general trees~\cite{EvKePeSc:00}, sharp thresholds have been established. We focus on that special case here.

\vspace{-0.4cm}\paragraph{The threshold of maximum parsimony}
Maximum parsimony is an ancestral reconstruction method with a long history in evolutionary biology. See e.g.~\cite{Steel:16}. Its underlying principle is simple and intuitive: it calls for assigning a state to {\em each} internal vertex in such a way as to {\em minimize the total number of changes} along the edges; the resulting state at the {\em root} (possibly not unique) is the desired reconstructed state. An advantage of maximum parsimony, which accounts partly for its occasional use in practice, is that it only requires knowledge of the tree---not of the substitution probabilities along the edges, which can be difficult to estimate accurately from data
and can affect the accuracy of reconstruction~\cite{cusimano_ultrametric_2014}. In fact maximum parsimony is equivalent to maximum likelihood under those circumstances~\cite{TuffleySteel:97}. Another root state estimator with that property is recursive majority, studied in~\cite{Mossel:98,Mossel:04a}. 

The theoretical properties of maximum parsimony for ancestral state reconstruction have been widely studied~\cite{Steel:89,TuffleySteel:97,fischer_maximum_2009,ZhShYa+:10,GascuelSteel:10,chai_number_2011,gascuel_predicting_2014,herbst_ancestral_2017,herbst_accuracy_2018,herbst_quantifying_2019}. From its very definition, 
one might expect maximum parsimony to perform well 
when the probabilities of substitution along the edges are ``sufficiently small.'' This statement was confirmed rigorously for the two-state symmetric model on complete binary trees with constant mutation probability in~\cite{ZhShYa+:10} (as first conjectured in~\cite{Steel:89}). Below a critical probability, maximum parsimony does indeed beat random guessing. A related result was obtained in~\cite{GascuelSteel:10} under a common random tree model known as the Yule tree. Here we substantially generalize both of these results. We give a general sufficient condition on the branching rate under which maximum parsimony 
succeeds. Our condition is related to the branching number, which plays an important in many processes on trees. See e.g.~\cite{LyonsPeres:16}. Other generalized notions of 
degree have been used in related contexts~\cite{sinclair_spatial_2017}.

\vspace{-0.4cm}\paragraph{Definitions}
In order to state our results formally, we begin with some definitions.
Let $\tree = (\vertices, \edges)$ be an infinite complete binary tree
rooted at $\root$. That is, all vertices of $\tree$ have exactly two children; in particular, $\tree$ has no leaf. Every edge $e \in \edges$ is assigned a weight $\weight_e \in [0,1]$. If $e = (x,y)$ where $x$ is the parent of $y$,
we also write $\weight_y = \weight_e$. 
We use the notation $x \leq y$ 
to indicate that $x$ is an ancestor of $y$ and we write
$x < y$ for $x \leq y$ and $x \neq y$. We also
let $s(x)$ be the sibling of $x \neq \root$.

Under the {\em Cavender-Farris (CF) model} 
over $\tree$ and $\weight = (\weight_y)_{y\in \vertices}$,
also known as Neyman $2$-state model,
we associate to each vertex $x \in \vertices$ a
state $\state_x \in \{0,1\}$ as follows. The state at the
root, $\state_\root$, is picked uniformly at random
in $\{0,1\}$. Recursively, if $y$ has parent $x$,
state $\state_y$ is equal to $\state_x$ with probability $\weight_y$, otherwise it is picked uniformly at random
in $\{0,1\}$. We let 
$p_y = \P_{\tree,\weight}[\state_x \neq \state_y]
= \frac{1-\weight_y}{2}$
be the {\em probability of a substitution} on
edge $e = (x,y)$.
Here $\P_{\tree,\weight}$ stands for the probability 
operator under the CF model over $\tree$ and
$\weight$.
The CF model is equivalent to
the Fortuin-Kastelyn random cluster representation
of the ferromagnetic Ising model on $\tree$ with
a free boundary. See e.g.~\cite{EvKePeSc:00}
and references therein.

Informally,
a root state estimator is a map
returning a (possibly randomized) guess 
for the state of the root, based on the knowledge of
the states on an observed cutset.
\begin{definition}[Cutset]
	A {\em cutset} is a minimal, finite set of vertices $\cutset \subseteq \vertices$ such that all infinite self-avoiding paths starting at $\root$ must
	visit $\cutset$. We let $\allcutsets(\tree)$ be the 
	collection of all cutsets of $\tree$. We denote by
	$\tree^\cutset = (\vertices^\cutset,\edges^\cutset)$ the finite tree obtained from
	$\tree$ after removing all descendants of the
	vertices in $\cutset$.
\end{definition}
\noindent As mentioned above, our focus in this work is on a root state estimator
known in phylogenetics as {\em maximum parsimony}. 
The parsimony principle dictates that
one assigns to each vertex $x$ 
(ancestor to the observed cutset $\cutset$)
a state $\estate_x$ such that the overall number
of changes along the edges of $\tree^\cutset$, namely,
$\sum_{(x,y)\in\edges^\cutset} \ind\{\estate_x \neq \estate_y\}$,
is minimized,
where we let by default $\estate_z = \state_z$ 
for all $z \in \cutset$. In case both $0$ and $1$
can be obtained in this way as root state, 
a uniformly random value in $\{0,1\}$ is returned.
We let $\accuracy^\cutset_{\tree,\weight}$ be 
the {\em reconstruction accuracy} 
of parsimony, i.e.~the probability that it correctly reconstructs the root state.

\vspace{-0.4cm}\paragraph{Main result: deterministic weights}
In our main result, we give conditions under which the reconstruction
accuracy of maximum parsimony is uniformly bounded away from $1/2$. 
\begin{theorem}[Reconstruction accuracy of parsimony: deterministic weights]
	\label{theorem:deterministic}
Let $\tree = (\vertices,\edges)$ be an infinite complete binary tree with
edge weights $\weight = (\weight_z)_{z\in \vertices}$ satisfying
$\minweight := \inf_{z \in \vertices} \weight_z > 0$
and
\begin{equation}
\label{eq:branching-cond}
\sup\left\{\kappa > 0 : \inf_{\cutset \in \allcutsets(\tree)}
\sum_{x\in\cutset}\prod_{\root \neq z \leq x} \kappa^{-1}\weight_{z} > 0\right\}
> \frac{3}{2}.
\end{equation}
Then $\inf_{\cutset \in \allcutsets(\tree)}
\accuracy_{\tree,\weight}
^\cutset > \frac{1}{2}$, i.e., the reconstruction accuracy of parsimony
on $\tree$ is away from $1/2$.
\end{theorem}
\noindent 
Condition~\eqref{eq:branching-cond} involves the {\em branching number}, a generalized notion of branching rate which
plays a key role in the analysis of many stochastic
processes on trees and tree-like graphs. See e.g.~\cite{LyonsPeres:16}.
The following example provides some intuition in a special case.
\begin{example}[Fixed edge weights]
	\label{example:br-fixed-weights}
	As a simple illustration, observe that,
	when all weights are equal to $\minweight \in (0,1]$,
	the supremum in~\eqref{eq:branching-cond} is attained for $\kappa = 2\minweight$.
	Indeed,
	the sum in~\eqref{eq:branching-cond} when $\kappa = 2\minweight$
	simplifies to
	\begin{equation*}
	\sum_{x\in \cutset}
	\prod_{\root \neq z \leq x}
	(2 \minweight)^{-1}
	\weight_{z}
	= 
	\sum_{x\in \cutset}
	\prod_{\root \neq z \leq x}
	\frac{1}{2}  
	= 1 > 0
	\end{equation*} 
	for any cutset $\cutset$,
	where the equality can be proved by induction
	on the graph distance from the root to the furthest
	vertex in $\cutset$. On the other hand, letting
	$\cutset_n$ be the cutset of all vertices at graph
	distance $n$ from $\root$,
	for any $\eps > 0$
	it holds that
	\begin{equation*}
	\sum_{x\in \cutset_n}
	\prod_{\root \neq z \leq x}
	((2 + \eps)\minweight)^{-1}
	\weight_{z}
	= 
	(2 + \eps)^{-n} \cdot 2^n
	\to 0,
	\end{equation*} 
	as $n \to +\infty$. 
	Hence, in this case, condition~\eqref{eq:branching-cond} reduces to $2\minweight > 3/2$, that is, $\minweight > 3/4$. In terms of substitution probability, this is $p_* = \frac{1 - \minweight}{2} < 1/8$.
\end{example}
\noindent The argument in the example above leads
to the following corollary.
\begin{corollary}[All substitution probabilities below the threshold]
Let the substitution probabilities
$(p_z)_{z\in \vertices}$ satisfy $p_z \in [0,p_*]$ for all $z \in \vertices$, for some $p_* < 1/8$.
Then the reconstruction accuracy of maximum parsimony
on $\tree$ is bounded away from $1/2$.
\end{corollary}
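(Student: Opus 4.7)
The plan is to reduce the corollary directly to Theorem~\ref{theorem:deterministic} by checking that the hypotheses~\eqref{eq:minweight-cond} and~\eqref{eq:branching-cond} are implied by the uniform substitution probability bound. The reduction goes through the one-to-one correspondence $\weight_z = 1 - 2 p_z$ between weights and substitution probabilities, and only requires a clean lower bound on the cutset sum.

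First I would translate the hypothesis $p_z \in [0,p_*]$ with $p_* < 1/8$ into the weight bound
\begin{equation*}
\weight_z \;=\; 1 - 2 p_z \;\geq\; 1 - 2 p_* \;=:\; \minweight^- \;>\; 1 - \tfrac{1}{4} \;=\; \tfrac{3}{4},
\end{equation*}
uniformly in $z \in \vertices$. This immediately gives~\eqref{eq:minweight-cond}, since $\inf_z \weight_z \geq \minweight^- > 0$.

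Next I would verify~\eqref{eq:branching-cond} by taking the test value $\kappa = 2\minweight^-$, which is strictly greater than $3/2$ by the previous display. With this choice, $\kappa^{-1} \weight_z \geq \kappa^{-1} \minweight^- = 1/2$ for every $z \in \vertices$. Therefore, for any cutset $\cutset \in \allcutsets(\tree)$,
\begin{equation*}
\sum_{x \in \cutset} \prod_{\root \neq z \leq x} \kappa^{-1}\weight_{z}
\;\geq\; \sum_{x \in \cutset} \prod_{\root \neq z \leq x} \tfrac{1}{2}
\;=\; \sum_{x \in \cutset} 2^{-|x|},
\end{equation*}
where $|x|$ denotes the graph distance from $\root$ to $x$. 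The standard cutset identity on the complete binary tree (provable by induction on the maximal depth of $\cutset$, exactly as in Example~\ref{example:br-fixed-weights}) yields $\sum_{x \in \cutset} 2^{-|x|} = 1$, so the displayed sum is bounded below by $1$ uniformly in $\cutset$. Hence $\kappa = 2\minweight^-$ belongs to the set in~\eqref{eq:branching-cond}, and the supremum is at least $2\minweight^- > 3/2$, establishing~\eqref{eq:branching-cond}.

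With both hypotheses of Theorem~\ref{theorem:deterministic} in force, the conclusion $\inf_{\cutset} \accuracy^\cutset_{\tree,\weight} > 1/2$ follows at once. There is essentially no obstacle here: the only content is the arithmetic $p_* < 1/8 \Leftrightarrow \minweight^- > 3/4 \Leftrightarrow 2\minweight^- > 3/2$ combined with the elementary cutset identity, and all of the real work has been absorbed into Theorem~\ref{theorem:deterministic}.
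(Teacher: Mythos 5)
Your proposal is correct and matches the paper's intended argument: the paper explicitly states that the corollary follows from the argument in Example~\ref{example:br-fixed-weights}, which is precisely the reduction you carry out (translate $p_*<1/8$ to $\inf_z\weight_z \geq 1-2p_* > 3/4$, test $\kappa = 2(1-2p_*) > 3/2$, and invoke the cutset identity $\sum_{x\in\cutset}2^{-|x|}=1$ to verify~\eqref{eq:branching-cond}). The only slight addition on your part is making the monotonicity step $\kappa^{-1}\weight_z \geq 1/2$ explicit to handle non-constant weights bounded below by $1-2p_*$, which is exactly what the paper leaves implicit.
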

\noindent In general, Theorem~\ref{theorem:deterministic} 
cannot be improved in the following sense. It was shown in~\cite[Theorem 4.1]{ZhShYa+:10} that when $p_z =p_* > 1/8$,
that is, $\weight_z = 1 - 2p_z = \minweight < 3/4$
for all $z$ then
$\inf_{n}
\accuracy_{\tree,\weight}
^{\cutset_n} = \frac{1}{2}$,
where $\cutset_n$ is defined in Example~\ref{example:br-fixed-weights}. On the other hand, it is not known whether the reconstruction accuracy necessarily converges to $1/2$ if~\eqref{eq:branching-cond} is not satisfied. We leave this as an open problem.

Zhang et al.~\cite{ZhShYa+:10} also established the special case
of Theorem~\ref{theorem:deterministic} when
$p_z =p_* < 1/8$ for all $z$ and $\cutset = \cutset_n$.
Their proof proceeds through a careful analysis 
of the limit of a recurrence for $\accuracy^{\cutset_n}_{\tree,\weight}$ first derived
in~\cite{Steel:89}. Our
more general result follows from a softer argument which
relies on the instability of a fixed point of this recurrence
corresponding to asymptotic reconstruction accuracy $1/2$. A more detailed proof sketch is given in Section~\ref{sec:prelim} following some preliminaries. We note that our proof method may be of more general interest, e.g.,~to extend the results beyond the two-state case where the higher dimensionality of the system may complicate significantly the derivation of an explicit limit even when edge weights are constant.

\vspace{-0.4cm}\paragraph{Main result: i.i.d.~weights}
We also obtain a related result in the case of edge weights that are i.i.d. No lower bound on the weights is needed in this case, unlike Theorem~\ref{theorem:deterministic}.
\begin{theorem}[Reconstruction accuracy of parsimony: i.i.d.~weights]
	\label{theorem:iid}
	Let $\tree = (\vertices,\edges)$ be an infinite complete binary tree with
	edge weights $\weight = (\weight_z)_{z\in \vertices}$ drawn i.i.d.~from a
	distribution $\distr$ over $(0,1]$. Let $\mu$ be the mean of $\distr$ 
	and assume that $\mu > 3/4$. Then, for any $\delta > 0$, 
	there is $\eps > 0$ such that
$	\inf_{\cutset \in \allcutsets(\tree)}
	\accuracy_{\tree,\weight}
	^\cutset > \frac{1}{2}(1+\eps),
$
	with probability at least $1-\delta$.
\end{theorem}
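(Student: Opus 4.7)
The plan is to reduce Theorem~\ref{theorem:iid} to Theorem~\ref{theorem:deterministic} by showing that, with high probability over the i.i.d.~weights, the branching condition~\eqref{eq:branching-cond} holds for some $\kappa > 3/2$, and then to adapt the deterministic argument to cope with the absence of a uniform lower bound on the weights. Since $\mu > 3/4$ implies $2\mu > 3/2$, the first step is to fix some $\kappa \in (3/2, 2\mu)$ that will remain valid throughout.

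The main probabilistic input is a branching-random-walk estimate. Consider the normalized additive martingale
\begin{equation*}
\hat M_n \;=\; (2\mu)^{-n}\sum_{x \in L_n}\prod_{\root \neq z \leq x}\weight_z,
\end{equation*}
where $L_n$ denotes level $n$. Since the $\weight_z$ are bounded in $(0,1]$ and binary branching precludes extinction, Biggins' theorem (via Lyons' $L\log L$ criterion, trivially satisfied here) gives $\hat M_n \to \hat M_\infty$ a.s.~and in $L^1$ with $\E[\hat M_\infty] = 1$ and $\hat M_\infty > 0$ a.s. For any $\delta > 0$, pick $c_\delta > 0$ so that $\P[\hat M_\infty \geq c_\delta] \geq 1 - \delta/2$. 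On this event, for any cutset $\cutset$, a standard stopping-line argument for branching random walks (Lyons--Peres~\cite{LyonsPeres:16}) combined with the drift $2\mu/\kappa > 1$ yields a deterministic $c_0 = c_0(\delta, \kappa) > 0$ such that
\begin{equation*}
\inf_{\cutset \in \allcutsets(\tree)}\sum_{x \in \cutset}\prod_{\root \neq z \leq x}\kappa^{-1}\weight_z \;\geq\; c_0,
\end{equation*}
so condition~\eqref{eq:branching-cond} holds quantitatively with $\kappa > 3/2$.

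The final step is to convert this branching condition into a positive bias at the root. Here one cannot simply invoke Theorem~\ref{theorem:deterministic} because $\inf_z \weight_z = 0$ a.s.~under the i.i.d.~assumption, so the plan is to revisit the proof strategy outlined in Section~\ref{sec:prelim}: derive the parsimony bias recurrence up the tree, linearize around the unstable $1/2$-accuracy fixed point, and track the resulting random multiplicative process via the branching condition. The key observation is that the instability-of-fixed-point argument operates edge-by-edge and does not intrinsically require a uniform lower bound on $\weight_z$ --- what it requires is a summable cutset bound of the form just established. I expect the main obstacle to lie precisely in this last step: the deterministic proof uses $\inf_z \weight_z > 0$ to uniformly control higher-order (nonlinear) corrections in the recurrence, and in the i.i.d.~case these can be large when $\weight_z$ is close to zero. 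To handle this, the plan is to truncate: fix a small threshold $K > 0$ (chosen so that $2\,\E[\weight\,\ind_{\weight \geq K}] > 3/2$, possible since $\mu > 3/4$) and absorb edges with $\weight_z < K$ into the nonlinear error term, bounding their total effect along any root-to-cutset path using the i.i.d.~sparsity of such edges and a union bound over a further high-probability event of mass at least $1 - \delta/2$. Combining the two events of total mass at least $1 - \delta$ then yields the desired uniform bound $\accuracy^\cutset_{\tree,\weight} > (1+\eps)/2$.
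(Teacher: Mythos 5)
Your high-level plan correctly identifies both the reduction you want (a quantitative branching condition holding w.h.p.) and the real obstacle (the deterministic argument uses $\inf_z\weight_z>0$ in an essential way to control the nonlinear $\FF$-term). You also correctly observe that truncation at a threshold $K$ with $2\E[\weight\ind\{\weight\geq K\}]>3/2$ is the natural move. The additive-martingale/Biggins route you sketch for the branching condition is a perfectly reasonable alternative to the paper's citation of the branching-number results for random trees, and the stopping-line observation is sound.

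The gap is in the sentence ``absorb edges with $\weight_z<K$ into the nonlinear error term, bounding their total effect along any root-to-cutset path using the i.i.d.~sparsity of such edges and a union bound.'' This does not work as stated, and it is precisely where the paper has to do something nontrivial. The reason is that the relevant quantity in Lemma~\ref{lem:root-cutset} is the product
$\prod_{\root\neq z\leq x}\left\{\frac{4-\FF^\cutset_{s(z)}}{6}\right\}\weight_z$,
and controlling $\FF^\cutset_{s(z)}$ via Lemma~\ref{lem:controlling-f} requires \emph{every} edge in a depth-$H$ neighborhood below $s(z)$ to have weight bounded away from $0$ --- a single tiny weight anywhere in that neighborhood breaks the induction that drives $|\FF|$ down to $O(\phi')$, because the $\GG$-growth estimate in Lemma~\ref{lem:g-growth} degrades as $(2/\minweight)^{\gamma}$. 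Sparsity of bad edges along the spine to the cutset is therefore not enough: what you need is a whole subtree where all weights in $(H{+}1)$-deep blocks are good, and a union bound cannot produce such a subtree (you would be union-bounding over infinitely many vertices or cutsets, and the failure probability at any fixed depth does not shrink). The paper's resolution is a coupling to edge percolation on \emph{blocks}: declare a vertex $z$ ``open'' if all $2^{H+1}$ descendants at distance $H{+}1$ below it have $\weight>\cutoff$, show the resulting percolation cluster $\ttree$ survives with probability close to $1$ and has branching number $2\tilde q\tilde\mu>3/(2(1-\phi'))$, and then run the contradiction argument entirely inside $\ttree$ (dropping the closed parts of $\cutset'$, which only decreases the sum in Lemma~\ref{lem:root-cutset}). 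That percolation-of-blocks construction is the missing ingredient in your proposal; without it, or something playing the same role, the $\FF$-control step does not go through.

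One smaller point: even your Biggins/$\hat M_\infty$ argument, as written, establishes the branching condition for the original tree $\tree$, whereas what is ultimately needed is the analogous condition for the thinned subtree $\ttree$ with the conditioned weight distribution $\tweight_z\sim\weight_z\,|\,\{\weight_z>\cutoff\}$ (whose mean $\tilde\mu>\mu$). Once you have the percolation structure, you would want to run the branching-number argument on $(\ttree,\tweight)$ conditioned on survival, which is exactly what the paper's Lemma~\ref{lem:branching-random} supplies.
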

\noindent The previous theorem covers in particular 
the case of the pure birth process, or Yule tree, which
is a popular random model of phylogenetic trees. See e.g.~\cite{Steel:16}. In that case,
$\weight_z = e^{-2 T_z}$, where $T_z$ is an exponential
with rate $\lambda$. To derive the corresponding threshold,
we note that
$$
\mu
= \E[\weight_z]
= \int_0^\infty e^{-2t} \lambda e^{-\lambda t}\,\diff t
= \frac{\lambda}{\lambda + 2}.
$$
Then
$
\mu > \frac{3}{4}
\iff
\lambda > 6,
$
which is consistent with the results of~\cite{GascuelSteel:10}.

\section{Preliminaries}
\label{sec:prelim}


\paragraph{Computing parsimony}
Our proofs are based on a recurrence for the reconstruction accuracy.
Maximum parsimony can be computed
efficiently by dynamic programming, which is referred to as the {\em Fitch method}.
\begin{definition}[Parsimony recursion]
	\label{definition:fitch}
	The Fitch method recursively
	constructs a set $\fitchstate^\cutset_z$ of possible states for each vertex $z \in \vertices^\cutset$, starting
	from $\pi$, as follows. If $z\in \pi$, $\fitchstate^\cutset_z = \{\state_z\}$. If $z\notin \pi$ and has children $x$ and $y$,
	\begin{eqnarray*}
		\fitchstate^\cutset_{z} 
		=
		\begin{cases}
			\fitchstate^\cutset_{x} \cap \fitchstate^\cutset_{y}, & \text{if $\fitchstate^\cutset_{x} \cap \fitchstate^\cutset_{y}\neq\emptyset$} \\
			\fitchstate^\cutset_{x} \cup \fitchstate^\cutset_{y}, & \text{o.w.}
		\end{cases}
	\end{eqnarray*}
	The method returns the maximum parsimony estimator
	$\estate_\root$ which is equal to the unique state
	in $\fitchstate^\cutset_\root$ if $|\fitchstate^\cutset_\root| = 1$,
	and otherwise returns a uniformly random value in $\{0,1\}$.
\end{definition}
\noindent What is described above is
the bottom-up phase of the Fitch method. (A top-down
phase, which we will not require here, then assigns a state
to each vertex in $\vertices^\cutset$ consistent
with a maximum parsimony solution;~e.g.~\cite{Steel:16}.)
Let $\cutset$ be an arbitrary cutset
on $\tree$
with states $\state_u$, $u\in \pi$,
and let $\fitchstate_z$, $z\in \vertices^\cutset$,
be the corresponding reconstructed sets under the Fitch method.
We define
\begin{eqnarray*}
	&&\alpha_z^\cutset = \P_{\tree,\weight}\left[\fitchstate^\cutset_z = \{\sigma_z\}\right], \qquad 
	\beta_z^\cutset = \P_{\tree,\weight}\left[\fitchstate^\cutset_z = \{1-\sigma_z\}\right].
\end{eqnarray*}
Under our randomization
rule, the reconstruction accuracy 
of maximum parsimony $\accuracy_{\tree,\weight}
^\cutset$ is given by
\begin{eqnarray}
	\P_{\tree,\weight}\left[\fitchstate^\cutset_\root = \{\sigma_\root\}\right]
	+ \frac{1}{2} \P_{\tree,\weight}\left[\fitchstate^\cutset_\root = \{0,1\}\right]
	&=& \alpha^\cutset_\root + \frac{1}{2}(1 - \alpha^\cutset_\root - \beta^\cutset_\root)
	= \frac{1}{2} + \frac{1}{2}(\alpha^\cutset_\root - \beta^\cutset_\root).\label{eq:ra-alphabeta}
\end{eqnarray}

\vspace{-0.4cm}\paragraph{Proof sketch}
Fix $\weight$ satisfying the assumptions
of Theorem~\ref{theorem:deterministic} and fix
a cutset $\cutset$. 
To analyze $(\alpha^\cutset_z,\beta^\cutset_z)$, 
it is natural to take advantage of
the recursive nature of $\tree$.
Let $x$ and $y$ be the children of $z$. The event
$\fitchstate^\cutset_z = \{\sigma_z\}$ occurs when
either (i) $\fitchstate^\cutset_x = \{\sigma_z\}$
and $\fitchstate^\cutset_y = \{\sigma_z\},$ or
(ii) $\fitchstate^\cutset_x = \{\sigma_z\}$
and $\fitchstate^\cutset_y = \{0,1\}$ or vice versa.
By the Markov property of the CF model, the random variables 
$\fitchstate^\cutset_x$ and $\fitchstate^\cutset_y$,
which are functions only of the states of $\pi$ under $x$ and
$y$ respectively,
are conditionally independent
given
$\state_z$. Hence, letting $q_u = 1 - p_u$ for $u = x,y$ and
taking into account the possibility of a mutation
along the edges $(z,x)$ and $(z,y)$, it follows 
as first derived in~\cite[Lemma 7.20]{Steel:89} that
\begin{eqnarray}
\alpha^\cutset_z 
&=& 
(q_x\alpha^\cutset_{x} + p_x\beta^\cutset_{x})(q_y\alpha^\cutset_{y}+p_y\beta^\cutset_{y})\nonumber\\
&& \qquad 
+ (q_x\alpha^\cutset_{x} + p_x\beta^\cutset_{x})(1-\alpha^\cutset_{y}-\beta^\cutset_{y})
+ (1-\alpha^\cutset_{x}-\beta^\cutset_{x})(q_y\alpha^\cutset_{y}+p_y\beta^\cutset_{y}),\label{eq:recurrence-alpha}
\end{eqnarray} 
where the first and second lines on the r.h.s.~correspond
respectively to cases (i) and (ii) above.
Similarly,
\begin{eqnarray}
\beta^\cutset_z &=& (p_x\alpha^\cutset_{x} + q_x\beta^\cutset_{x})(p_y\alpha^\cutset_{y}+q_y\beta^\cutset_{y})\nonumber\\
&& \qquad +(p_x\alpha^\cutset_{x} + q_x\beta^\cutset_{x})(1-\alpha^\cutset_{y}-\beta^\cutset_{y})
+ (1-\alpha^\cutset_{x}-\beta^\cutset_{x})(p_y\alpha^\cutset_{y}+q_y\beta^\cutset_{y}).\label{eq:recurrence-beta}
\end{eqnarray} 

In the case that
$p_u = p$ for all $u$, a fixed point analysis
was performed in~\cite[Theorem 7.22]{Steel:89}.
It was found that, if $p \geq 1/8$, there is a single
fixed point $(1/3,1/3)$ which corresponds informally to ``having
no information about the root.'' While
if $p < 1/8$, there is an additional
fixed point $(\alpha_p^\infty,\beta_p^\infty)$ with
$\alpha_p^\infty > \beta_p^\infty$. Convergence to 
$(1/3,1/3)$ in the first case and $(\alpha_p^\infty,\beta_p^\infty)$ in the second case was established rigorously
in~\cite{ZhShYa+:10}. One step in~\cite{ZhShYa+:10}
involved the derivation of a new recurrence in terms of the
quantities $\alpha_z^{\cutset_n} - \beta_z^{\cutset_n}$
and
$1 - (\alpha_z^{\cutset_n} + \beta_z^{\cutset_n})$,
which facilitates the analysis of the limit in the fixed edge weight case.

Going back to binary trees with general weights, as our starting point we further modify the recurrence 
of~\cite{ZhShYa+:10}. For all $z \in \vertices$,
we define
\begin{eqnarray}
\GG_z^\cutset 
=
\alpha_z ^\cutset - \beta_z^\cutset,
\qquad \text{and}\qquad
\FF_z^\cutset 
=
3(\alpha_z ^\cutset+\beta_z^\cutset) - 2.\label{eq:definition-f}
\end{eqnarray} 
We show in Proposition~\ref{prop:recurrence} below that $(\GG_z^\cutset,\FF_z^\cutset)$ satisfies the following recurrence
\begin{eqnarray}
	\GG^\cutset_z &=& \left(\frac{4-\FF^\cutset_y}{6}\right)\weight_x \GG^\cutset_x + \left(\frac{4-\FF^\cutset_x}{6}\right)\weight_y \GG^\cutset_y,\label{eq:recurrence-g}\\
	\FF^\cutset_z &=& \frac{3}{2}\weight_x\weight_y \GG^\cutset_x \GG^\cutset_y - \frac{1}{2}\FF^\cutset_x \FF^\cutset_y,\label{eq:recurrence-f}
\end{eqnarray} 
for $z \in \vertices^\cutset - \cutset$ with children
$x,y$,
as well as the inequalities 
$0 \leq \GG_z^\cutset \leq 1$ and $-1/2 \leq \FF_z^\cutset \leq 1$ for all $z \in \vertices^\cutset$ and the boundary conditions
$\GG_z^\cutset = \FF_z^\cutset = 1$ for all $z \in \cutset$.
Our choice of parametrization is motivated in part
by the fact that the ``no information'' fixed point is now
at $(0,0)$ and that $|\GG_z^\cutset|, |\FF_z^\cutset| \leq 1$.
At a high level we show that, under the branching rate condition~\eqref{eq:branching-cond}, the fixed point $(0,0)$ is ``unstable'' and that $\GG^\cutset_z$ in particular stays bounded away form $0$. That in terms implies a lower bound on the reconstruction accuracy as, by~\eqref{eq:ra-alphabeta}, we have
$$
\accuracy^\cutset_{\tree,\weight}
= \frac{1}{2} + \frac{1}{2}\GG^\cutset_\root.
$$ 

The link between stability and the weighted branching rate in~\eqref{eq:branching-cond} is easily seen from~\eqref{eq:recurrence-g}. First, when all weights are equal to $\minweight$ and $\cutset = \cutset_n$, we get by symmetry that close to $(0,0)$ to the first order
$
\GG^\cutset_z \approx \frac{4}{3}\minweight \GG^\cutset_x.
$
Hence, in that case, the solution can be expected to grow when $\frac{4}{3}\minweight > 1$, corresponding to the condition derived in Example~\ref{example:br-fixed-weights}. More generally, we use~\eqref{eq:recurrence-g} to relate the
$\GG$-value at the root to the $\GG$-values on a cutset (see Lemma~\ref{lem:root-cutset}). To deal with the nonlinear nature of~\eqref{eq:recurrence-g} and~\eqref{eq:recurrence-f}, we control
the $\FF$-values thanks to the quadratic form of~\eqref{eq:recurrence-f} which implies a quick decay towards $0$ (see Lemma~\ref{lem:controlling-f}).

\vspace{-0.4cm}\paragraph{Recurrence}
Before proceeding to the proof of our main results, 
we first establish a basic recurrence which follows from the work of~\cite{ZhShYa+:10}. We give a short proof for completeness. 
\begin{proposition}[Recurrence and basic properties]
	\label{prop:recurrence}
	The following hold:
	\begin{itemize}
		\item[-] {\em [Boundary conditions]} For all $z \in \cutset$, $\GG_z^\cutset = 1$ and $\FF_z^\cutset = 1$.
		
		\item[-] {\em [Recurrence]} For all $z \in \vertices^\cutset - \cutset$, if 
		$x,y$ are the children of $z$, the system~\eqref{eq:recurrence-g} and~\eqref{eq:recurrence-f} holds.
		
		\item[-] {\em [Bounds]} For all $z \in \vertices^\cutset$, we have $0 \leq \GG_z^\cutset \leq 1$ and $-1/2 \leq \FF_z^\cutset \leq 1$.
		
	\end{itemize}
\end{proposition}
\begin{proof}
	We start with the boundary conditions. By Definition~\ref{definition:fitch}(a) and the definitions
	of $\alpha_z^\cutset$ and $\beta_z^\cutset$,
	we have for all $z \in \cutset$ that
		$\alpha_z^\cutset = \P_{\tree,\weight}[\fitchstate^\cutset_z = \{\sigma_z\}] = 1$ 
		and 
		$\beta_z^\cutset = \P_{\tree,\weight}[\fitchstate^\cutset_z = \{1-\sigma_z\}] = 0$.
	So $\FF_z^\cutset = 3(\alpha_z ^\cutset+\beta_z^\cutset) - 2 =1$ and 
	$\GG_z^\cutset = \alpha_z ^\cutset - \beta_z^\cutset = 1$.

	The second statement is merely a change of variables. We briefly expand on the first equation (the other one being similar). Let $z \in \vertices^\cutset - \cutset$ with children
	$x,y$. We define $\Sigma_u = \alpha_u^\cutset + \beta_u^\cutset$ and $\Delta_u = \alpha_u^\cutset - \beta_u^\cutset$.
	By the definitions of $p_x$ and $q_x$, note that
	\begin{equation*}
	q_x\alpha^\cutset_{x} + p_x\beta^\cutset_{x}
	= \left(\frac{1+\weight_x}{2}\right)\alpha^\cutset_{x} 
	+ \left(\frac{1-\weight_x}{2}\right) \beta^\cutset_{x}
	= \frac{1}{2} \Sigma_x + \frac{1}{2}\weight_x \Delta_x,
	\end{equation*}
	and, similarly,
	$p_x\alpha^\cutset_{x} + q_x\beta^\cutset_{x}
	= \frac{1}{2} \Sigma_x - \frac{1}{2}\weight_x \Delta_x.$
	Hence, by~\eqref{eq:recurrence-alpha} and~\eqref{eq:recurrence-beta},
	\begin{equation*}
	\alpha_z^\cutset
	= \frac{1}{4}(\Sigma_x + \weight_x \Delta_x)
	(\Sigma_y + \weight_y \Delta_y)
	+ \frac{1}{2}(\Sigma_x + \weight_x \Delta_x)(1 - \Sigma_y)
	+ \frac{1}{2}(1-\Sigma_x)(\Sigma_y + \weight_y \Delta_y),
	\end{equation*}
	and
	\begin{equation*}
	\beta_z^\cutset
	= \frac{1}{4}(\Sigma_x - \weight_x \Delta_x)
	(\Sigma_y - \weight_y \Delta_y)
	+ \frac{1}{2}(\Sigma_x - \weight_x \Delta_x)(1 - \Sigma_y)
	+ \frac{1}{2}(1-\Sigma_x)(\Sigma_y  - \weight_y \Delta_y).
	\end{equation*}
	Subtracting the above two equations, we get
	\begin{eqnarray*}
		\Delta_z
		&=&  
		\frac{1}{2} \weight_x \Delta_x \Sigma_y 
		+ \frac{1}{2} \weight_y \Delta_y \Sigma_x
		+ \weight_x \Delta_x (1 - \Sigma_y)
		+ \weight_y \Delta_y (1-\Sigma_x)\\
		&=&  
		\left(1 - \frac{1}{2}\Sigma_y\right) \weight_x \Delta_x 
		+ \left(1 - \frac{1}{2}\Sigma_x\right) \weight_y \Delta_y,
	\end{eqnarray*}
	which after plugging 
	in~\eqref{eq:definition-f} gives~\eqref{eq:recurrence-g}.
	
	Because $\alpha_z^\cutset$ and $\beta_z^\cutset$ are
	probabilities and further $\alpha_z^\cutset + \beta_z^\cutset \leq 1$, we have that 
	$\GG_z^\cutset \leq 1$ 
	and
	$\FF_z^\cutset \leq 1$, 
	for all $z$.
	Moreover, that together with the boundary conditions 
	and~\eqref{eq:recurrence-g}
	implies that $\GG_z^\cutset \geq 0$ for all $z$ by induction. In turn, that together with the boundary conditions 
	and~\eqref{eq:recurrence-f} 
	implies that
	$\FF_z^\cutset \geq -1/2$ for all $z$.
\end{proof}

\section{Deterministic weights}

Before proceeding with the proof of Theorem~\ref{theorem:deterministic}, 
we first prove some lemmas. 

\subsection{Controlling $\GG$- and $\FF$-values}

In the first lemma, we express the $\GG$-value
at the root as a function of the $\GG$- and $\FF$-values above an arbitrary cutset. Recall that
$s(z)$ is the sibling of $z$.
\begin{lemma}[Controlling the root with a cutset]
\label{lem:root-cutset}
For any cutset $\cutset'$ in $\tree^\cutset$, it holds that
\begin{equation}
\GG^\cutset_{\rho} 
= \sum_{x\in\cutset'} \GG^\cutset_{x} \prod_{\root \neq z \leq x} \left\{\frac{4-\FF^\cutset_{s(z)}}{6}\right\}\theta_z. \label{GBranchingNumber}
\end{equation}
\end{lemma}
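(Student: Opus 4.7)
The plan is to prove~\eqref{GBranchingNumber} by induction on the ``fineness'' of $\cutset'$, starting from the trivial cutset $\cutset' = \{\root\}$ and refining it downward through $\tree^\cutset$ one vertex at a time. The base case is immediate: when $\cutset' = \{\root\}$, the sum collapses to a single term, the product indexed by $\{z : \root \neq z \leq \root\}$ is empty, and the right-hand side of~\eqref{GBranchingNumber} equals $\GG^\cutset_\root$.

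For the inductive step, I take an arbitrary cutset $\cutset'$ of $\tree^\cutset$ and pick any $v \in \cutset' \setminus \cutset$ with children $v_1, v_2$. Consider the refined cutset $\cutset'' := (\cutset' \setminus \{v\}) \cup \{v_1, v_2\}$, which is again a cutset of $\tree^\cutset$. The products appearing in the $\cutset''$-sum for $x = v_1$ and $x = v_2$ share a common ``ancestral'' factor
$$\Pi_v := \prod_{\root \neq z \leq v}\frac{4-\FF^\cutset_{s(z)}}{6}\weight_z$$
(taken to be $1$ when $v = \root$), and differ only in the bottom factor $\tfrac{4-\FF^\cutset_{s(v_i)}}{6}\weight_{v_i}$. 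Using $s(v_1) = v_2$ and $s(v_2) = v_1$, the combined contribution of $v_1$ and $v_2$ to the $\cutset''$-sum is
$$\Pi_v\left[\frac{4-\FF^\cutset_{v_2}}{6}\weight_{v_1}\GG^\cutset_{v_1} + \frac{4-\FF^\cutset_{v_1}}{6}\weight_{v_2}\GG^\cutset_{v_2}\right] = \Pi_v \cdot \GG^\cutset_v,$$
where the bracket is exactly the right-hand side of the recurrence~\eqref{eq:recurrence-g}. This matches the contribution of $v$ to the $\cutset'$-sum, while all other terms are the same in both sums; hence the right-hand side of~\eqref{GBranchingNumber} is invariant under the elementary refinement $\cutset' \to \cutset''$.

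Finally, every cutset $\cutset'$ of the finite tree $\tree^\cutset$ is reachable from $\{\root\}$ by a finite sequence of such single-vertex refinements (e.g.\ induct on $\sum_{x \in \cutset'} d(\root, x)$), so the identity propagates to all $\cutset'$. The whole argument amounts to exactly one invocation of~\eqref{eq:recurrence-g} per refinement step; there is no substantive obstacle, the only mildly delicate points being the verification that each refined set is a valid cutset of $\tree^\cutset$ and the separate handling of the boundary case $v = \root$ where the ancestral product is empty.
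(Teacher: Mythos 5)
Your proof is correct and matches the paper's approach: the paper's proof is just the one sentence ``The result follows by recursively applying~\eqref{eq:recurrence-g} from the root down to $\cutset'$,'' and your refinement induction is precisely a careful writing-out of that recursive unrolling, including the observation (also flagged in the paper) that minimality of the cutset is what guarantees $s(v) \in \cutset'$ whenever $v$ is a deepest vertex, so the elementary coarsening/refinement steps are well defined.
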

\begin{proof}
The result follows by recursively applying~\eqref{eq:recurrence-g} from the
root down to $\cutset'$. We implicitly use the fact that, by definition, 
a cutset is minimal. 
\end{proof}

Our second lemma shows that $\GG$-values cannot
grow too fast down the tree. This fact will be useful to proving the next key lemma. We will need the lower bound 
$
\minweight = \inf_{z \in \vertices} \weight_z > 0,
$
on the $\weight$-values. For $v,w \in \vertices^\cutset$, we let $\gamma(v,w)$
be the graph distance between $v$ and $w$ in $\tree^\cutset$. Recall that $\minweight \leq 1 < 2$.
\begin{lemma}[Growth of $\GG$-values]
\label{lem:g-growth}
Fix any $v \in \vertices^\cutset$. Then $\forall\eps' > 0$,
for all descendants $w$ of $v$ in $\vertices^\cutset$
$$
\GG_v^\cutset \leq \eps'
\implies 
\GG^\cutset_w \leq \eps' \left(\frac{2}{\minweight}\right)^{\gamma(v,w)}.
$$
\end{lemma}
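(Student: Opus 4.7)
The plan is to establish a simple one-step amplification bound: if $z \in \vertices^\cutset - \cutset$ has children $x$ and $y$, then $\GG^\cutset_x \leq (2/\minweight)\,\GG^\cutset_z$, with the symmetric bound for $y$. The lemma then follows by iterating this inequality along the unique path from $v$ down to any descendant $w$ in $\tree^\cutset$.

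For the one-step bound, I would start from the recurrence~\eqref{eq:recurrence-g}. Both summands on the right-hand side are non-negative, since $\GG^\cutset_u \geq 0$ and $\weight_u \geq 0$, and $4 - \FF^\cutset_u \geq 3 > 0$ from the upper bound $\FF^\cutset_u \leq 1$ in~\eqref{eq:bounds-f}. Dropping the second summand and using $\FF^\cutset_y \leq 1$ together with $\weight_x \geq \minweight$ gives
\[
\GG^\cutset_z \;\geq\; \frac{4 - \FF^\cutset_y}{6}\,\weight_x\,\GG^\cutset_x \;\geq\; \frac{1}{2}\cdot \minweight \cdot \GG^\cutset_x,
\]
which rearranges to $\GG^\cutset_x \leq (2/\minweight)\,\GG^\cutset_z$. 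The analogous bound holds for $y$ by symmetry.

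Next I would iterate. If $v \in \cutset$, the only descendant of $v$ in $\vertices^\cutset$ is $v$ itself, so $\gamma(v,w) = 0$ and the stated bound is trivial. Otherwise $v \in \vertices^\cutset - \cutset$, and for a descendant $w$ at graph distance $k = \gamma(v,w)$, I would write the path $v = u_0, u_1, \ldots, u_k = w$ with each $u_{i+1}$ a child of $u_i$. Every $u_i$ with $i < k$ is an ancestor of $w \in \vertices^\cutset$ possessing a child $u_{i+1} \in \vertices^\cutset$, so $u_i$ cannot be a leaf of $\tree^\cutset$ and therefore lies in $\vertices^\cutset - \cutset$; hence the one-step bound applies, yielding
\[
\GG^\cutset_{u_{i+1}} \;\leq\; \frac{2}{\minweight}\,\GG^\cutset_{u_i}.
\]
Composing these $k$ inequalities and invoking the hypothesis $\GG^\cutset_v \leq \eps'$ produces the claimed bound.

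There is no substantive obstacle here: the result is a direct consequence of the recurrence~\eqref{eq:recurrence-g} and the a priori bounds of Proposition~\ref{prop:recurrence}. The only minor point of care is verifying that every interior vertex on the path from $v$ to $w$ lies in $\vertices^\cutset - \cutset$ so that the recurrence is applicable, which is immediate from the definition of $\tree^\cutset$ together with the fact that $w$ is assumed to be a descendant of $v$ inside $\vertices^\cutset$.
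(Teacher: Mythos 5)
Your proof is correct and follows essentially the same argument as the paper's: both establish the one-step inequality $\GG^\cutset_z \geq \frac{1}{2}\minweight\,\GG^\cutset_x$ from the recurrence~\eqref{eq:recurrence-g} together with the bounds $\FF^\cutset_u \leq 1$, $\GG^\cutset_u \geq 0$, and $\weight_u \geq \minweight$, then iterate down the path from $v$ to $w$. You spell out the iteration a bit more explicitly, but the substance is the same.
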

\begin{proof}
Let $z \in \vertices^\cutset$, not on $\cutset$, have children $x$ and $y$. 
(Note that, in the case where $z$ is the parent of a vertex on the cutset $\cutset$, $z$ itself cannot be on the cutset by minimality and therefore both its children are in $\vertices^\cutset$.)
By Proposition~\ref{prop:recurrence}, 
we have $\FF^\cutset_x, \FF^\cutset_y \leq 1$
and $\GG^\cutset_x, \GG^\cutset_y \geq 0$, which implies that
both terms on the r.h.s.~of~\eqref{eq:recurrence-g} are non-negative.
Hence, using $\weight_x \geq \minweight$, \eqref{eq:recurrence-g} gives
$\GG^\cutset_z 
\geq \frac{1}{2} \minweight \GG^\cutset_x$.
In particular,  $\GG^\cutset_z \leq \eps'$ implies that
$\GG^\cutset_x \leq \eps' (2/\minweight)$. 
Recursing gives the claim.
\end{proof}

Our final lemma controls $\FF$-values at the root of a subtree where
$\GG$-values are uniformly small. 
\begin{lemma}[Controlling $\FF$ when $\GG$ is small]
\label{lem:controlling-f}
Fix any $0 <  \phi' \leq 1/9$ and $v \in \vertices^\cutset$. Then there exists $\eps' > 0$ depending only on $\minweight$ and $\phi'$ such that:
 $
 \GG_v^\cutset \leq \eps'(2/\minweight)
 $
 implies 
 $
 \left|\FF^\cutset_v\right| \leq 4 \phi'.
 $
\end{lemma}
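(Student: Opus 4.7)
The plan is to bound $|\FF_v^\cutset|$ by descending the recurrence~\eqref{eq:recurrence-f} from $v$ through a finite number $D = D(\phi')$ of levels, using Lemma~\ref{lem:g-growth} to keep the $\GG$-contribution on the right-hand side small throughout and relying on the quadratic term $\tfrac12 \FF_x^\cutset \FF_y^\cutset$ for super-exponential decay as we move back up. I will decouple the problem: first I pick $D$ from a purely scalar analysis depending only on $\phi'$, then I pick $\eps'$ small enough (as a function of $D$ and $\minweight$) to make the tree-level bounds match.

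To choose $D$, consider the scalar map $f(x) := \tfrac{3}{2}(\phi')^2 + \tfrac{1}{2}x^2$. Its fixed points are $x^* = 1 - \sqrt{1 - 3(\phi')^2}$ and $x^{**} = 1 + \sqrt{1 - 3(\phi')^2}$; since $\phi' \leq 1/9 < 1/\sqrt{3}$, both are real, $x^{**} > 1$, and the elementary inequality $1 - \sqrt{1-t} \leq t$ gives $x^* \leq 3(\phi')^2 \leq \phi'/3 < 4\phi'$. Starting from $x_0 = 1 \in [x^*, x^{**})$ the orbit $x_{k+1} = f(x_k)$ decreases monotonically to $x^*$, so I let $D = D(\phi')$ be the smallest nonnegative integer with $x_D \leq 4\phi'$; this is finite because $x^* < 4\phi'$.

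Next I set $\eps' := \phi'(\minweight/2)^{D+1}$, which depends only on $\minweight$ and $\phi'$. By Lemma~\ref{lem:g-growth}, every descendant $w$ of $v$ in $\vertices^\cutset$ then satisfies $\GG_w^\cutset \leq (\eps'\cdot 2/\minweight)(2/\minweight)^{\gamma(v,w)} = \eps'(2/\minweight)^{\gamma(v,w)+1}$. For $\gamma(v,w) \leq D$ this upper bound is at most $\phi' < 1$; since cutset vertices have $\GG = 1$, no vertex at depth $\leq D$ from $v$ lies on $\cutset$, so in particular $v$ itself is off the cutset and every vertex at depth $\leq D-1$ has two children in $\vertices^\cutset$. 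Define $M_d := \sup\{|\FF_u^\cutset| : u \in \vertices^\cutset,\ \gamma(v,u) = d\}$, which is well-defined for $d \leq D$; then $M_D \leq 1$ by~\eqref{eq:bounds-f}, and for each $d \leq D-1$ and vertex $u$ at depth $d$ with children $x,y$, the recurrence~\eqref{eq:recurrence-f} combined with $\GG_x^\cutset, \GG_y^\cutset \leq \phi'$ and $\weight_x,\weight_y \leq 1$ yields
\begin{equation*}
|\FF_u^\cutset| \leq \tfrac{3}{2}\weight_x\weight_y\,\GG_x^\cutset\GG_y^\cutset + \tfrac{1}{2}|\FF_x^\cutset||\FF_y^\cutset| \leq \tfrac{3}{2}(\phi')^2 + \tfrac{1}{2}M_{d+1}^2 = f(M_{d+1}).
\end{equation*}
Since $f$ is monotone on $[0,\infty)$, iterating produces $M_0 \leq f^D(M_D) \leq f^D(1) = x_D \leq 4\phi'$, and therefore $|\FF_v^\cutset| \leq M_0 \leq 4\phi'$.

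The main subtlety I anticipate is closing the loop between $D$ and $\eps'$: the scalar analysis determines how many layers are needed to collapse the $\FF$-bound below $4\phi'$, and then $\eps'$ must be chosen small enough for Lemma~\ref{lem:g-growth} to enforce not only $\GG < 1$ (so the top $D$ layers avoid the cutset and the recurrence is applicable) but in fact $\GG \leq \phi'$ throughout those layers. Fixing $D$ first from the scalar map and then $\eps'$ in terms of $D$ and $\minweight$ breaks the circularity. The other essential ingredient is the inequality $x^* \leq 3(\phi')^2 < 4\phi'$, which is exactly where the hypothesis $\phi' \leq 1/9$ is used and what guarantees that $D$ is finite.
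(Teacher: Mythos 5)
Your proof is correct and takes essentially the same approach as the paper's: confine the $\GG$-values within a finite neighborhood of $v$ via Lemma~\ref{lem:g-growth}, then iterate the quadratic $\FF$-recurrence level-by-level via a supremum sequence descending from depth $D$ (where the trivial bound $|\FF| \leq 1$ is used) back up to $v$. The only difference is in how the resulting scalar recurrence is analyzed: the paper fixes the horizon $H$ via the doubly-exponential decay $(1/2)^{-1+2^{H}} \leq \phi'$ and proves the explicit inductive bound $\FFabs_h \leq 3\phi' + (1/2)^{-1+2^{H-h}}$, while you identify the attracting fixed point $x^* = 1 - \sqrt{1-3(\phi')^2} < 4\phi'$ of the map $f(x) = \tfrac{3}{2}(\phi')^2 + \tfrac{1}{2}x^2$ and invoke monotone convergence to guarantee a finite horizon $D$, a somewhat cleaner packaging of the same choice.
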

\begin{proof}
Let $H$ be the smallest non-negative integer such that
\begin{equation}
\label{eq:def-H}
\left(\frac{1}{2}\right)^{-1+ 2^{H}} \leq \phi'.
\end{equation}
Define $\eps' > 0$ to be the largest positive real such that
\begin{equation}
\label{eq:epsp-1}
\eps' \left(\frac{2}{\minweight}\right)^{H+1} \leq 0.99
\qquad\text{and}\qquad
\frac{3}{2} \left[\eps' \left(\frac{2}{\minweight}\right)^{H+1} \right]^2 \leq \phi'.
\end{equation}
The rest of the proof proceeds in two steps: we derive a simplified recurrence for $\FF$-values and solve it.
	
	{\it 1. Simplified recurrence:}  Assume that
	$\GG_v^\cutset \leq \eps' (2/\minweight)$. 
	Let $w$ be a descendant of $v$ with graph distance
	$\gamma(v,w) \leq H$. Then, by Lemma~\ref{lem:g-growth}, 
	$
	\GG^\cutset_w \leq \eps' \left(2/\minweight\right)^{H+1} < 1,
	$
	where we used~\eqref{eq:epsp-1}. 
 This show, in particular, that all  descendants of $v$ in $\vertices$ within graph distance $H$ are in fact strictly above $\cutset$, because $\GG$-values are $1$ on the cutset $\cutset$.
	Moreover,
	by the recurrence~\eqref{eq:recurrence-f} and the inequality~\eqref{eq:epsp-1}, for any
	descendant $w_0$ of $v$ with children $w_1, w_2$ in $\vertices^\cutset$ that are within graph distance $H$ of $v$,
	we have
	\begin{equation}
	\label{eq:bound-f-abs}
	\left|\FF^\cutset_{w_0}\right|
	\leq \frac{3}{2}\theta_{w_1} \theta_{w_2} \GG^\cutset_{w_1} \GG^\cutset_{w_2}
	+ \frac{1}{2} \left|\FF^\cutset_{w_1}\right| \left|\FF^\cutset_{w_2}\right|
	\leq \phi' + \frac{1}{2} \left|\FF^\cutset_{w_1}\right| \left|\FF^\cutset_{w_2}\right|,
	\end{equation}
	where we used that $\theta$-values are $\leq 1$ and~\eqref{eq:epsp-1}. 
	Define
	$
	\FFabs_h
	= \sup\left\{
	\left|\FF^\cutset_w\right|
	\,:\, \text{$w \in \vertices^\cutset$, $v \leq w$ and $\gamma(v,w) = h$}
	\right\},
	$
	and $\FFabs_0 = \left|\FF^\cutset_v\right|$. By the remark above, the set in the previous display is non-empty for all $h = 0,\ldots,H-1$.
	Taking a supremum on both sides of~\eqref{eq:bound-f-abs}
	gives the recurrence
	\begin{equation}
	\label{eq:fabs-recurse}
	\FFabs_{h}
	\leq \phi'
	+
	\frac{1}{2}
	\FFabs_{h+1}^2, \qquad 0 \leq h\leq H-1.
	\end{equation}
	
	{\it 2. Solution:} We show by induction on $h$ (backwards from $H-1$) that 
	\begin{equation}
	\label{eq:fabs-induction}
	\FFabs_h 
	\leq 3\phi'
	+ 
	\left(\frac{1}{2}\right)^{-1+2^{H-h}}.
	\end{equation}
	For the base of the induction $h = H-1$, we have indeed that
	$$
	\FFabs_{H-1} 
	\leq \phi'
	+ 
	\frac{1}{2}
	\leq 3\phi'
	+ 
	\left(\frac{1}{2}\right)^{-1+2^{H-(H-1)}},
	$$
	where we used~\eqref{eq:fabs-recurse} and the fact that
	$\FF$-values are $\leq 1$ in absolute value.
	Assume the induction claim~\eqref{eq:fabs-induction} 
	holds for all $h' +1 \leq h \leq H-1$. We show it then holds for $h = h'$.
	Indeed, by~\eqref{eq:fabs-recurse} again,
	\begin{eqnarray*}
		\FFabs_{h'}
		&\leq& \phi'
		+
		\frac{1}{2}
		\FFabs_{h'+1}^2
		\leq
		\phi'
		+
		\frac{1}{2}
		\left[3\phi'
		+ 
		\left(\frac{1}{2}\right)^{-1+2^{H-(h' +1)}}\right]^2\\
		&\leq& 
		\phi'
		\left[
		1 + \frac{9 \phi'}{2} + 3 \left(\frac{1}{2}\right)^{-1+2^{H- (H-1)}}
		\right]
		+ 
		\left(\frac{1}{2}\right)^{2\{-1+2^{H-(h' +1)}\}+1}\\
		&\leq& 
		\phi'
		\left[
		\frac{5}{2} + \frac{9 \phi'}{2}
		\right]
		+ 
		\left(\frac{1}{2}\right)^{-1+2^{H-h'}}.
	\end{eqnarray*}
	Because by assumption $\phi' \leq 1/9$, the square bracket
	above is $\leq 3$. That concludes the induction. 
	
 By our choice of $H$ in~\eqref{eq:def-H}, that implies
$
\left|\FF^\cutset_v\right|
= \FFabs_0
\leq 3\phi' 
+ \left(1/2\right)^{-1+2^{H}}
\leq 4\phi'. 
$ 
\end{proof}

\subsection{Proof of main theorem}

\begin{proof}[Proof of Theorem~\ref{theorem:deterministic}]
Fix $\cutset \in \allcutsets(\tree)$ and assume that
$\minweight > 0$ and that~\eqref{eq:branching-cond} holds.
Then
there is 
$0 < \phi' \leq 1/9$ and $0 < \zeta < 1$ such that
\begin{equation}
\label{eq:phiprime-zeta}
\sum_{x\in\cutset'}
\prod_{\root \neq z \leq x} 
\left\{\frac{2}{3}(1-\phi')\right\}\weight_{z} \geq \zeta,
\end{equation}
for all cutsets $\cutset' \in \allcutsets(\tree)$. For this
value of $\phi'$,
let $\eps'$ be as in Lemma~\ref{lem:controlling-f}
and define 
$
\eps = \eps' \zeta < \eps'.
$
The proof proceeds by contradiction. Assume that
$\GG^\cutset_\root \leq \eps$. 
Let $\cutset'$ be the cutset
of those nodes closest to the root where the
$\GG$-values first cross above $\eps'$, i.e., formally
$
\cutset'
=
\{
x \in \vertices^\cutset
\,:\,
\text{$\GG^\cutset_x > \eps'$ and $\GG^\cutset_z \leq \eps',\ \forall z\leq x$}
\}.
$
Such a cutset (which is necessarily minimal) exists because
$\GG^\cutset_v = 1$ for all $v \in \cutset$ and $\eps' > \eps$. 
By Lemma~\ref{lem:g-growth},
for all $z$ on or above $\cutset'$, i.e.~such that $z \leq x$ for some $x \in \cutset'$, we have
$
\GG^\cutset_z \leq \eps' \frac{2}{\minweight}
$ and 
$
\GG^\cutset_{s(z)} \leq \eps' \frac{2}{\minweight},
$
since the immediate parent of $z$ (and $s(z)$) has $\GG$-value
$\leq \eps'$ by definition of $\cutset'$. By Lemma~\ref{lem:controlling-f},
we then have
\begin{equation}
\label{eq:f-bound-above-piprime}
\left|\FF^\cutset_z\right| \leq 4 \phi'
\quad\text{and}\quad
\left|\FF^\cutset_{s(z)}\right| \leq 4\phi'.
\end{equation}
By Lemma~\ref{lem:root-cutset},~\eqref{eq:phiprime-zeta} 
and~\eqref{eq:f-bound-above-piprime}, summing over $\cutset'$
\begin{eqnarray*}
\GG^\cutset_{\rho} 
&=& \sum_{x\in\cutset'} \GG^\cutset_{x} \prod_{\root \neq z \leq x} \left\{\frac{4-\FF^\cutset_{s(z)}}{6}\right\}\weight_z
> \sum_{x\in\cutset'} \eps' \prod_{\root \neq z \leq x} \left\{\frac{2}{3}(1-\phi')\right\}\weight_z
\geq \eps' \zeta
= \eps,
\end{eqnarray*}
which is a contradiction. 
\end{proof}

%
%

\section{I.i.d.~weights}

In this section, we prove our main result in the i.i.d.~weight case. 
Because there is no
lower bound on the weights, Theorem~\ref{theorem:deterministic}
cannot be applied directly to this case. In particular, the absence of a lower
bound makes controlling the $\FF$-values more challenging.
Here we identify a subtree of $\tree$ where $\FF$-values are well-behaved.
The existence of such a subtree is established with a coupling to a
percolation process, where open edges roughly indicate that weights
are uniformly bounded in a properly defined neighborhood.



\begin{proof}[Proof of Theorem~\ref{theorem:iid}]
First, we need the following percolation result.  
To each edge $e = (x,y)$ of $\tree$, 
where $x$ is the parent of $y$,
we assign an independent random weight $\tweight_y$ drawn from a distribution $\tdistr$ over $(0,1]$. We also pick an independent indicator
variable $\tperc_y$, which is $1$ with probability $\tilde{q} \in [0,1]$ and $0$ otherwise.
Let $\ttree = (\tvertices,\tedges)$ be
the subtree of $\tree$
whose vertices $x$ satisfy $\prod_{\root \neq z\leq x}\tperc_z = 1$.
We let $\noext$ be the event of non-extinction, i.e., the event that $\ttree$ is infinite. The following result can be proved along the lines of 
Proposition 3.2, Proposition 5.1 and Corollary 5.2 in~\cite{Peres:99}.
\begin{lemma}[Branching condition: random edge weights on open cluster]
	\label{lem:branching-random}
	Fix $\tilde{q} \in [0,1]$ and assume that $\tdistr$ has mean $\tilde{\mu} \in (0,1)$. Then, conditioned on 
	$\noext$, 
	almost surely
	\begin{equation}
	\sup\left\{\kappa > 0 : \inf_{\cutset \in \allcutsets(\ttree)}
	\sum_{x\in\cutset}\prod_{\root \neq z \leq x} \kappa^{-1}\tweight_{z} > 0\right\}
	= 2 \tilde{q} \tilde{\mu}.
	\end{equation}	
\end{lemma}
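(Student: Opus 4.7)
The plan is to interpret the supremum as the weighted branching number of the Galton--Watson tree $\ttree$---which has offspring distribution $\mathrm{Bin}(2,\tilde{q})$ of mean $2\tilde{q}$---equipped with i.i.d.~random edge labels $\tweight_z$ of mean $\tilde{\mu}$. The target value $2\tilde{q}\tilde{\mu}$ arises naturally as the product of the mean offspring and the mean edge weight. The argument splits into matching upper and lower bounds, in the spirit of the tools recalled in~\cite{Peres:99}.

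For the upper bound, I would fix any $\kappa > 2\tilde{q}\tilde{\mu}$ and let $\cutset_n' = \{x \in \ttree : |x| = n\}$ denote the level-$n$ slice of $\ttree$, which is a cutset of $\ttree$ because every infinite ray in $\ttree$ crosses level $n$. By the independence of the $\tperc_z$'s and $\tweight_z$'s and the identity $\E[\tperc_z\tweight_z]=\tilde{q}\tilde{\mu}$,
\begin{equation*}
\E\biggl[\sum_{x \in \cutset_n'}\prod_{\root \neq z \leq x}\kappa^{-1}\tweight_z\biggr]
= 2^n\,(\kappa^{-1}\tilde{q}\tilde{\mu})^n
= (2\tilde{q}\tilde{\mu}/\kappa)^n \to 0.
\end{equation*}
Markov's inequality together with Borel--Cantelli along a geometric subsequence of $n$'s then forces the sum to vanish almost surely along a subsequence of cutsets, showing that the supremum in the lemma is at most $2\tilde{q}\tilde{\mu}$ almost surely.

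The lower bound is the main technical obstacle. For $\kappa < 2\tilde{q}\tilde{\mu}$, I plan to invoke Lyons' max-flow min-cut theorem (Proposition 3.2 in~\cite{Peres:99}) and reduce positivity of $\inf_\cutset \sum_{x\in\cutset}\prod_{\root \neq z \leq x}\kappa^{-1}\tweight_z$ to the existence of a positive unit flow from $\root$ to infinity in $\ttree$ with capacity $\prod_{\root \neq z \leq y}\kappa^{-1}\tweight_z$ into vertex $y$. Such a flow is constructed from the Mandelbrot cascade
\begin{equation*}
M_n = \sum_{x \in \cutset_n'}\prod_{\root \neq z \leq x}\frac{\tperc_z\tweight_z}{\tilde{q}\tilde{\mu}},
\end{equation*}
a nonnegative mean-one martingale. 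Recursing on the most recent common ancestor of pairs of vertices, and using that $\tweight_z \in (0,1]$ has finite second moment, should give $\sup_n \E[M_n^2] < \infty$ precisely in the regime $\kappa < 2\tilde{q}\tilde{\mu}$; then Doob's $L^2$ convergence theorem, combined with the standard branching-process fact that the martingale limit is a.s.~positive on $\noext$, yields the desired flow via a Lyons--Pemantle-style extraction (this is the content of Proposition 5.1 and Corollary 5.2 of~\cite{Peres:99}). The delicate point is the uniform-in-$n$ second-moment bound and the transfer from a pointwise ``good subsequence'' to simultaneous control over \emph{all} cutsets of $\ttree$---which is precisely what max-flow min-cut buys us, provided the cascade hypotheses are verified.
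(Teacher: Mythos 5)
Your proposal follows the same general strategy that the paper gestures at through its citations to \cite{Peres:99,Lyons:89,Lyons:90} (the paper itself supplies no proof), and the first-moment upper bound is correct. But the sketch of the lower bound contains a genuine conceptual error that would derail an attempt to fill it in.

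As you have written it, the cascade $M_n = \sum_{x}\prod_{\root\neq z\leq x}\tperc_z\tweight_z/(\tilde q\tilde\mu)$ is not mean one: since each vertex of $\tree$ has two children and $\E[\tperc_z\tweight_z]=\tilde q\tilde\mu$, one has $\E[M_n]=2^n$, and the correct normalizer is $2\tilde q\tilde\mu$. More importantly, once corrected, $M_n$ \emph{does not depend on $\kappa$ at all}, so the assertion that $\sup_n\E[M_n^2]<\infty$ holds ``precisely in the regime $\kappa<2\tilde q\tilde\mu$'' cannot be right. Carrying out the MRCA decomposition gives $\E[M_n^2]/\E[M_n]^2$ bounded iff $\E[(\tperc\tweight)^2]<2(\tilde q\tilde\mu)^2$, a $\kappa$-free condition which holds automatically here because $\tweight\in(0,1]$ and one assumes $2\tilde q\tilde\mu>1$. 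The threshold $\kappa<2\tilde q\tilde\mu$ must instead appear when you try to turn the a.s.~positive cascade limit into a flow respecting the capacities $C(x)=\prod_{\root\neq z\leq x}\kappa^{-1}\tweight_z$. The natural cascade flow $\Theta(x)=\prod_{\root\neq z\leq x}\bigl(\tperc_z\tweight_z/(2\tilde q\tilde\mu)\bigr)\cdot M_\infty^{(x)}/M_\infty^{(\root)}$ is a unit flow, but the constraint $\Theta(x)\leq C(x)$ reads $(\kappa/(2\tilde q\tilde\mu))^{|x|}\,M_\infty^{(x)}\leq M_\infty^{(\root)}$, which fails for some $x$ because the i.i.d.~copies $M_\infty^{(x)}$ have unbounded support. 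Closing this gap is exactly where $\kappa<2\tilde q\tilde\mu$ is used --- via Lyons's energy/capacity criterion for trees (the expected $\kappa$-energy of the cascade measure is a convergent geometric series iff $\kappa<2\tilde q\tilde\mu$) or via a percolation-survival argument --- and that is the actual content of the propositions you and the paper both cite. Finally, both routes yield positivity only with positive conditional probability given non-extinction, and you still need a zero-one/inherited-property argument to upgrade this to ``almost surely on $\noext$''; your sketch does not mention this last step.
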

\noindent By standard branching process arguments~\cite{AtheryaNey:72}, the extinction probability $\tilde{\varphi}$
satisfies
$
\tilde{\varphi} 
= 
\tilde{q}^2
\tilde{\varphi}^2
+
2
(1-\tilde{q})
\tilde{q}
\tilde{\varphi}
+
(1-\tilde{q})^2,
$
i.e., 
\begin{equation}
\label{eq:tilde-extinct}
\P[\noext]
=
\left(\frac{1-\tilde{q}}{\tilde{q}}\right)^2.
\end{equation}

As discussed briefly above, we use a coupling argument.
In order to describe the coupling, we first need to define some constants (not depending on $\weight$).
Recall that $\mu$ is the mean of the edge weight distribution $\distr$ and
that $\delta$ is the desired failure probability.
Let $q \in (0,1)$ be close enough to $1$ that 
\begin{equation}
\label{eq:q-extinct}
\left(\frac{1-q}{q}\right)^2 \leq \frac{\delta}{3},
\end{equation}
and
$2 q \mu > 3/2$.
Let then $0 < \phi' \leq 1/9$
be such that 
\begin{equation}
\label{eq:q-phiprime}
2 q \mu > \frac{3}{2(1-\phi')}.
\end{equation}
Let $H$ be the smallest non-negative integer such that
$\left(1/2\right)^{-1+ 2^{H}} \leq \phi'$.
Because
$\P_{\weight \sim \distr}[\weight \leq \cutoff] \to 0$ as $\cutoff \to 0$, we can take
$\cutoff \in (0,1)$ small enough that 
$
\pcut
= \P[\weight \leq \cutoff],
$
satisfies
\begin{equation}
\label{eq:tau-q}
(1-\pcut)^{2^{H+1}} > q,
\qquad \text{and} \qquad
1-(1-\pcut)^{2^{H+2}} \leq \frac{\delta}{3}.
\end{equation}

We are now ready to define the coupled process $(\ttree,\tweight)$. We use the following notation: $\text{Ber($\psi$)}$ is a Bernoulli random variable with success probability $\psi$ and $Z|\mathcal{A}$ is the random variable $Z$ conditioned on the event $\mathcal{A}$.
For each $z$, we generate 
$
\weight_z
= 
(1 - I_z)\weight_{z,0} 
+ 
I_z \weight_{z,1},
$
where $I_z \sim \text{Ber($\P[\weight_z > \minweight]$)}$,  
$\weight_{z,0} \sim \weight_z|\{\weight_z \leq \cutoff\}$, 
and $\weight_{z,1} \sim \weight_z|\{\weight_z > \cutoff\}$ are independent.
For $z \in \vertices$, let $\neighb_h(z)$ be the descendants of $z$ lying exactly $h$ levels below it, i.e.,
$
\neighb_h(z)
=
\left\{
w \in \vertices\,:\, \text{$z \leq w$ and $\gamma(z,w) = h$}
\right\}.
$
Define
$
\tilde{J}_z
= \prod_{w \in \neighb_{H+1}(z)} I_w,
$
and
$
\tweight_z = 
\weight_{z,1}.
$
Note that by construction the random variables 
$\tilde{J}_z \sim \text{Ber($(1-\pcut)^{2^{H+1}} $)}$ 
and $\tweight_z$, 
$z\in \vertices$,
are independent.
Let $\ttree$ be defined as in Lemma~\ref{lem:branching-random},
let 
$
\tilde{q} = (1-\pcut)^{2^{H+1}}
$ 
and let $\tilde{\mu}$ be the mean
of $\distr$ conditioned on being larger than $\minweight$.
By~\eqref{eq:tilde-extinct},~\eqref{eq:q-extinct}, and~\eqref{eq:tau-q},
$\P[\noext] \leq \frac{\delta}{3}$.

We 
apply Lemma~\ref{lem:branching-random} 
to $(\ttree,\tweight)$ to obtain a branching rate condition 
similar to~\eqref{eq:phiprime-zeta}. 
\begin{lemma}[Towards controlling $\GG$ at the root]
There is a deterministic (i.e., not depending on $\weight$) $0 < \zeta < 1$ such that,
on the event of non-extinction, with probability at least $1-\delta/3$
\begin{equation}
\label{eq:delta-2}
\inf_{\tilde{\cutset}' \in \allcutsets(\ttree)}\sum_{x\in\tilde{\cutset}'}
\prod_{\root \neq z \leq x} 
\left\{\frac{2}{3}(1-\phi')\right\}\tweight_{z} \geq \zeta.
\end{equation}
\end{lemma}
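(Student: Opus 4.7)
My plan is to apply Lemma~\ref{lem:branching-random} to the coupled process $(\ttree,\tweight)$ just constructed, and then turn the resulting almost-sure strict positivity into a deterministic constant. With edge-weight distribution equal to the law of $\weight$ conditional on $\{\weight>\cutoff\}$ and Bernoulli parameter $\tilde q=(1-\pcut)^{2^{H+1}}$, the lemma gives that, conditional on $\noext$, almost surely
$$
\sup\!\left\{\kappa > 0 : \inf_{\cutset \in \allcutsets(\ttree)}\sum_{x\in\cutset}\prod_{\root \neq z \leq x} \kappa^{-1}\tweight_{z} > 0\right\} \;=\; 2\tilde q\tilde\mu.
$$
So the first task is to check $2\tilde q\tilde\mu>\tfrac{3}{2(1-\phi')}$. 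Taking $\kappa=\tfrac{3}{2(1-\phi')}$ then gives $\kappa^{-1}=\tfrac{2}{3}(1-\phi')$, which is precisely the factor appearing in the product in~\eqref{eq:delta-2}.

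The strict inequality $2\tilde q\tilde\mu>\tfrac{3}{2(1-\phi')}$ follows from two elementary comparisons against the constants fixed before the statement. First, $\tilde q=(1-\pcut)^{2^{H+1}}>q$ is exactly~\eqref{eq:tau-q}. Second, a standard conditional-mean argument gives $\tilde\mu=\E[\weight\mid \weight>\cutoff]\ge \mu$: writing $\mu=\pcut\,\E[\weight\mid \weight\le\cutoff]+(1-\pcut)\tilde\mu$ expresses $\mu$ as a convex combination of two quantities, the first of which is $\le\cutoff\le\tilde\mu$. Multiplying these two bounds and invoking~\eqref{eq:q-phiprime} yields $2\tilde q\tilde\mu>2q\mu\ge\tfrac{3}{2(1-\phi')}$.

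With this $\kappa$ strictly less than the supremum, the random variable
$$
X \;:=\; \inf_{\tilde{\cutset}'\in\allcutsets(\ttree)}\;\sum_{x\in\tilde{\cutset}'}\prod_{\root\neq z\le x}\Bigl\{\tfrac{2}{3}(1-\phi')\Bigr\}\tweight_z
$$
is strictly positive almost surely on $\noext$. To extract a \emph{deterministic} $\zeta$, I would use continuity of probability: the events $\{X<1/k\}\cap\noext$ decrease to $\{X=0\}\cap\noext$, which is a null event, so $\P[X<1/k\mid\noext]\downarrow 0$. Consequently some $\zeta=1/k>0$ satisfies $\P[X\ge\zeta\mid\noext]\ge 1-\delta/3$, and this $\zeta$ does not depend on the realization of $\weight$.

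The main obstacle I foresee is the clean application of Lemma~\ref{lem:branching-random}: in the coupling, the indicators $\tilde J_z=\prod_{w\in\neighb_{H+1}(z)}I_w$ are \emph{not} jointly independent across $z$, because the $(H{+}1)$-level descendant neighborhoods of nearby vertices overlap. The natural workaround is to pass to the coarser sub-process consisting of vertices whose depth is a multiple of $H+1$: on this sub-process the surviving-edge indicators are genuinely i.i.d.\ Bernoulli$\bigl((1-\pcut)^{2^{H+1}}\bigr)$, producing a bona fide Galton--Watson tree to which Lemma~\ref{lem:branching-random} applies. A short monotonicity argument (adding the intermediate vertices back in can only make the weighted branching rate larger, while the product of weights along any ``coarse'' cutset is bounded by $1$ times its refinement) then transfers the lower bound back to the finer tree $\ttree$. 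Once this reduction is in place, the rest is routine bookkeeping with the constants chosen just above the statement.
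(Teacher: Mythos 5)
Your proposal follows the same outline as the paper's proof: apply Lemma~\ref{lem:branching-random} to $(\ttree,\tweight)$, verify $2\tilde q\tilde\mu>\tfrac{3}{2(1-\phi')}$ using $\tilde q>q$ (which is~\eqref{eq:tau-q}) and $\tilde\mu\ge\mu$, identify $\kappa^{-1}=\tfrac{2}{3}(1-\phi')$ as strictly inside the supremum, and extract a deterministic $\zeta$ via continuity of measure. All of these steps are correct, and your convex-combination argument for $\tilde\mu\ge\mu$ and the explicit $\P[X<1/k\mid\noext]\downarrow 0$ step are reasonable elaborations of what the paper states tersely.

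However, the ``obstacle'' you identify at the end, and the workaround you sketch, are both unnecessary and stem from a misreading of the coupling. You claim the indicators $\tilde J_z=\prod_{w\in\neighb_{H+1}(z)}I_w$ are not independent across $z$ because ``the $(H{+}1)$-level descendant neighborhoods of nearby vertices overlap.'' But $\neighb_{H+1}(z)$ is defined as the set of descendants of $z$ at graph distance \emph{exactly} $H+1$, not within distance $H+1$. These sets are pairwise disjoint: any vertex $w$ has a unique ancestor at distance exactly $H+1$ on the ray to the root, so $w$ belongs to $\neighb_{H+1}(z)$ for at most one $z$. Hence the $\tilde J_z$ are products over disjoint collections of i.i.d.\ Bernoulli$(1-\pcut)$ variables and are genuinely i.i.d.\ Bernoulli$\bigl((1-\pcut)^{2^{H+1}}\bigr)$, as the paper asserts; they are also independent of the $\tweight_z=\weight_{z,1}$. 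Lemma~\ref{lem:branching-random} therefore applies directly. Your coarsening to depth-$(H+1)$ blocks would, in any case, require more than a ``short monotonicity argument'': the block process has branching parameter $(2\tilde q\tilde\mu)^{H+1}$ rather than $2\tilde q\tilde\mu$, and relating cutset sums of the coarse tree to arbitrary cutsets of $\ttree$ is not immediate, so it is fortunate that none of this is needed.
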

\begin{proof}
On the event of non-extinction, we have almost surely that
$$
\sup\left\{\kappa > 0 : \inf_{\cutset \in \allcutsets(\ttree)}
\sum_{x\in\cutset}\prod_{\root \neq z \leq x} \kappa^{-1}\tweight_{z} > 0\right\}
= 2\tilde{q} \tilde{\mu} 
> 2 q \mu > \frac{3}{2(1-\phi')},
$$
where we used the fact that $\tilde{\mu} > \mu$ as well as~\eqref{eq:q-phiprime} and~\eqref{eq:tau-q}.
In particular, 
we can choose a deterministic (i.e., not depending on $\weight$) $0 < \zeta < 1$ such that,
on the event of non-extinction, with probability at least $1-\delta/3$ the inequality in~\eqref{eq:delta-2} holds.
\end{proof}

The purpose of the coupling is to show that the argument
used in Lemma~\ref{lem:controlling-f} to control the $\FF$-values can be applied to the vertices in $\ttree$. This is stated in the next lemma. For the choice
of $\phi'$, $H$ and $\minweight$ above,
let $\eps'$ be chosen as in the proof of Lemma~\ref{lem:controlling-f},
i.e., the largest positive real such that
\begin{equation*}
\frac{3}{2} \left[\eps' \left(\frac{2}{\minweight}\right)^{H+1} \right]^2 \leq \phi',
\qquad \text{and} \qquad
\eps' \left(\frac{2}{\minweight}\right)^{H+1} \leq 0.99.
\end{equation*}
\begin{lemma}[Controlling $\FF$ on $\ttree$]
For all $z \in \vertices \cap \tvertices$
\begin{equation}
\label{eq:2b-reloaded}
\GG_z^\cutset \leq \eps'\frac{2}{\minweight}
\implies 
\left|\FF^\cutset_z\right| \leq 4 \phi'
\ \ \text{and}\ \ 
\GG_{s(z)}^\cutset \leq \eps'\frac{2}{\minweight}
\implies 
\left|\FF^\cutset_{s(z)}\right| \leq 4 \phi'.
\end{equation}
\end{lemma}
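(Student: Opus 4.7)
My plan is to replay the proof of Lemma~\ref{lem:controlling-f} essentially verbatim on the depth-$(H+1)$ subtree rooted at $z$ (and analogously at $s(z)$), with the cutoff $\minweight$ playing the role of the deterministic lower bound on edge weights. The whole point of the coupling is to make this transfer possible: the i.i.d.\ weights admit no a priori lower bound, but the event $z \in \tvertices$ has been engineered to supply exactly the local weight control that the argument of Lemma~\ref{lem:controlling-f} requires.

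The first step is to convert $z \in \tvertices$ into a pointwise lower bound on weights. Since $z \in \tvertices$ forces $\tilde{J}_w = 1$ for every non-root ancestor $w \leq z$ (including $z$ itself when $z \neq \rho$), and since $\neighb_h(z) \subseteq \neighb_{H+1}(\parent^{H+1-h}(z))$ for each $h \in \{1, \ldots, H\}$, the identity $\tilde{J}_{\parent^{H+1-h}(z)} = 1$ forces $I_v = 1$, hence $\weight_v > \minweight$, for every $v \in \neighb_h(z)$---provided $\parent^{H+1-h}(z) \neq \rho$, which holds once $z$ lies at depth at least $H+1$ in $\tree$. Taking a union over $h$, every edge in the depth-$(H+1)$ subtree rooted at $z$ has weight strictly greater than $\minweight$. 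Using $\parent^k(s(z)) = \parent^k(z)$ for $k \geq 1$, the same reasoning applied to $s(z)$ yields the corresponding lower bound in the sibling subtree.

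With this local weight control available, the proof of Lemma~\ref{lem:g-growth}, restricted to the controlled subtree, delivers $\GG^\cutset_w \leq \eps'(2/\minweight)^{H+1}$ for every descendant $w$ of $z$ (respectively, of $s(z)$) within graph distance $H$, whenever the starting $\GG$-value is at most $\eps'\cdot 2/\minweight$. Because $\eps'$ was chosen so that $\tfrac{3}{2}[\eps'(2/\minweight)^{H+1}]^2 \leq \phi'$, the quadratic recurrence $\FFabs_h \leq \phi' + \tfrac{1}{2}\FFabs_{h+1}^2$, $0 \leq h \leq H-1$, from the proof of Lemma~\ref{lem:controlling-f} holds at every level, and the identical backwards induction produces $|\FF^\cutset_z| = \FFabs_0 \leq 4\phi'$, and likewise $|\FF^\cutset_{s(z)}| \leq 4\phi'$.

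The main obstacle will be the translation step, because the indicator $\tilde{J}_z$ alone controls only the depth-exactly-$(H+1)$ descendants of $z$, so one must stack the constraints supplied by the various ancestor indicators $\tilde{J}_{\parent^k(z)}$, $k = 1, \ldots, H$, to cover every intermediate layer. The accompanying boundary issue is the case where $z$ sits close to $\rho$ in $\tree$, so that some of those ancestors coincide with $\rho$ and the argument above no longer produces a lower bound on every edge of the local subtree. This restriction is harmless for the main theorem because the lemma is subsequently applied on a cutset of $\ttree$ pushed sufficiently far below $\rho$, but it is the one point that requires care.
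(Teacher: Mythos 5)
Your plan captures the paper's intended argument: use the coupling to turn membership in $\tvertices$ into a deterministic lower bound $\weight_v > \minweight$ on the edges within distance $H$ of $z$ (and of $s(z)$), and then replay the proof of Lemma~\ref{lem:controlling-f} on that local subtree, with $\minweight=\cutoff$ playing the role of the global lower bound. The translation step---stacking $\tilde{J}_{\parent^{k}(z)}=1$ for $k=1,\dots,H$ so that $\neighb_h(z)\subseteq\neighb_{H+1}(\parent^{H+1-h}(z))$ covers every intermediate layer---is exactly the paper's first observation, and the depth range $h\in\{1,\dots,H\}$ you identify is precisely what feeding Lemma~\ref{lem:g-growth} into the $\FF$-recurrence of Lemma~\ref{lem:controlling-f} requires. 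You also correctly flag the genuine subtlety near the root: when $\depth(z)\leq H$, some required ancestor $\parent^{H+1-h}(z)$ is $\root$ or does not exist, and $z\in\tvertices$ alone no longer produces the local weight bound.

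Your proposed fix for that boundary issue, however, is not the one that makes the argument close. You suggest it is harmless because the lemma is ``applied on a cutset of $\ttree$ pushed sufficiently far below $\root$,'' but in the proof of Theorem~\ref{theorem:iid} the lemma is invoked for \emph{every} $z\in\vertices\cap\tvertices$ on or above the crossing cutset $\cutset'$, not only on the cutset itself: the identity of Lemma~\ref{lem:root-cutset} needs the bound $\bigl|\FF^\cutset_{s(z)}\bigr|\leq 4\phi'$ along the entire ancestral path from $\cutset'$ to $\root$. So shallow $z$'s cannot be avoided. The paper instead deals with them by conditioning on the separate high-probability event $\mathcal{H}$ that $\weight_w>\minweight$ for all $w$ within graph distance $H$ of $\root$ (with $\P[\mathcal{H}]\geq 1-\delta/3$ by the choice of $\cutoff$ in~\eqref{eq:delta-1}), which supplies the missing near-root weight control. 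Without some device of this kind the argument has a real gap for shallow $z$, so this is the one place where your plan needs to be amended rather than merely noted as delicate.
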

\begin{proof}
	First, observe that for any $z \in \vertices \cap \tvertices$
	it must be that 
	$\weight_w > \minweight$, $\forall w \in \cup_{h \leq H+1} \neighb_h(z)$.
	Indeed, note that the unique $v$ such that $v \leq z$ and $\gamma(v,z) = h$
	satisfies $v \in \ttree$ and 
	$
	\neighb_{H+1-h}(z) \subseteq \neighb_{H+1}(v)
	$.

As a result, repeating the proof of Lemma~\ref{lem:controlling-f}, it follows
	that for all $z \in \vertices \cap \tvertices$
	\begin{equation*}
	\GG_z^\cutset \leq \eps'\frac{2}{\minweight}
	\implies 
	\left|\FF^\cutset_z\right| \leq 4 \phi'
	\ \ \text{and}\ \ 
	\GG_{s(z)}^\cutset \leq \eps'\frac{2}{\minweight}
	\implies 
	\left|\FF^\cutset_{s(z)}\right| \leq 4 \phi',
	\end{equation*}
	where we used the fact that the parent of $z$ is in $\ttree$ and therefore has 
	all its descendants within $H+1$ level with $\weight$-values above $\minweight$. The latter in turn implies that $s(z)$ itself has 
	all its descendants within $H$ level with $\weight$-values above $\minweight$.
That concludes the proof of the lemma.
\end{proof}

The rest of the proof of Theorem~\ref{theorem:iid} is similar to that of Theorem~\ref{theorem:deterministic}, except
that we restrict the argument to $\ttree$.
Define 
$
\eps = \eps' \zeta < \eps'.
$
Let $\mathcal{H}$ be the event that $\weight_w > \minweight$ for all
$w$ such that $\gamma(\root,w) \leq H$. By~\eqref{eq:tau-q}, $\P[\mathcal{H}] \geq 1 - \delta/3$. Condition on $\mathcal{H}$, $\noext$ and~\eqref{eq:delta-2},
which jointly occur with probability at least $1-\delta$.
Assume by contradiction that
$\GG^\cutset_\root \leq \eps$
and let
$
\cutset'
=
\{
x \in \vertices^\cutset
\,:\,
\text{$\GG^\cutset_x > \eps'$ and $\GG^\cutset_z \leq \eps',\ \forall z\leq x$}
\}.
$
By Lemma~\ref{lem:g-growth},
for all $z\in\vertices\cap \tvertices$ such that $z \leq x$ for some $x \in \cutset'$, we have
$
\GG^\cutset_z \leq \eps' \frac{2}{\minweight}
$
and
$
\GG^\cutset_{s(z)} \leq \eps' \frac{2}{\minweight},
$
since the immediate parent of $z$ (and $s(z)$) has $\GG$-value
$\leq \eps'$ by definition of $\cutset'$ and $\weight$-values on $\ttree$
are above $\minweight$. 
By~\eqref{eq:2b-reloaded},
we then have
\begin{equation}
\label{eq:f-bound-above-piprime-iid}
\left|\FF^\cutset_z\right| \leq 4 \phi'
\quad\text{and}\quad
\left|\FF^\cutset_{s(z)}\right| \leq 4\phi',
\end{equation}
for all $z\in\vertices\cap \tvertices$ such that $z \leq x$ for some $x \in \cutset'$.
Similarly to Theorem~\ref{theorem:deterministic}, by Lemma~\ref{lem:root-cutset},~\eqref{eq:delta-2} 
and~\eqref{eq:f-bound-above-piprime-iid},
\begin{eqnarray*}
	\GG^\cutset_{\rho} 
	&=& 
	\sum_{x\in\cutset'} \GG^\cutset_{x} \prod_{\root \neq z \leq x} \left\{\frac{4-\FF^\cutset_{s(z)}}{6}\right\}\weight_z
	\geq
	\sum_{x\in\cutset'\cap \tvertices} \GG^\cutset_{x} \prod_{\root \neq z \leq x} \left\{\frac{4-\FF^\cutset_{s(z)}}{6}\right\}\tweight_z
	\geq \eps' \zeta
    =\eps,
\end{eqnarray*}
where we used again that 
$\weight$-values on $\ttree$
are above $\minweight$ 
as well as
the fact that all terms in the sum are non-negative by the general bounds on the 
$\GG$- and $\FF$-values. 
This is a contradiction. 
\end{proof}


\section*{Acknowledgments}

	We thank Mike Steel for helpful discussions and an anonymous reviewer for suggested improvements to a previous version of the manuscript.

\bibliographystyle{alpha}
\bibliography{my,thesis}

\end{document}